\crefname{equation}{}{}
\newtheorem{theorem}{Theorem}[section]
\newtheorem{lemma}[theorem]{Lemma}
\newtheorem{corollary}[theorem]{Corollary}
\newtheorem{proposition}[theorem]{Proposition}
\newtheorem*{conjecture*}{Conjecture}
\theoremstyle{definition}
\newtheorem*{definition}{Definition}
\theoremstyle{remark}
\newtheorem*{remark}{Remark}
\newtheorem*{TwoRemarks}{Two Remarks}
\newtheorem*{example}{Example}
\newtheorem*{examples}{Examples}
\numberwithin{equation}{section}
\DeclareMathOperator{\Tr}{Tr}
\DeclareMathOperator{\lcm}{lcm}
\DeclareMathOperator{\denom}{denom}
\DeclareMathOperator{\Div}{Div}
\newcommand{\C}{\mathbb C}
\newcommand{\SL}{\mathrm{SL}}
\newcommand{\Q}{\mathbb Q}
\newcommand{\Z}{\mathbb Z}
\newcommand{\ord}{\mathrm{ord}}
\title[Traces of Partition Eisenstein series]{Traces of Partition Eisenstein series}
\thanks{2020 {\it{Mathematics Subject Classification.}} 11F03, 05A17, 11M36}
\keywords{partition Eisenstein series, quasimodular forms, Andrews-Garvan crank, Jacobi forms}
\author{Tewodros Amdeberhan, Michael Griffin, Ken Ono, \and Ajit Singh}
\address{Dept. of Mathematics, Tulane University, New Orleans, LA 70118}
\email{tamdeber@tulane.edu}
\address{Dept. of Mathematics, Vanderbilt University, Nashville, TN 37240}
\email{michael.j.griffin@vanderbilt.edu}
\address{Dept. of Mathematics, University of Virginia, Charlottesville, VA 22904}
\email{ko5wk@virginia.edu}
\email{ajit18@iitg.ac.in}
\begin{document}
\begin{abstract} 
We study ``partition Eisenstein series'', extensions of the Eisenstein series $G_{2k}(\tau),$ defined by
$$\lambda=(1^{m_1}, 2^{m_2},\dots, k^{m_k}) \vdash k \ \ \ \ \ \longmapsto \ \ \ \ \ 
G_{\lambda}(\tau):= G_2(\tau)^{m_1} G_4(\tau)^{m_2}\cdots G_{2k}(\tau)^{m_k}.
$$
For functions $\phi  :  \mathcal{P}\mapsto \C$ on partitions,  the {\it weight $2k$ partition Eisenstein trace} is the quasimodular form
$$
\Tr_k(\phi;\tau):=\sum_{\lambda \vdash k} \phi(\lambda)G_{\lambda}(\tau).
$$
These traces give explicit formulas for some well-known generating functions, such as the $k$th elementary symmetric functions of the inverse points of 2-dimensional complex lattices $\Z\oplus \Z\tau,$ as well as the $2k$th power moments of the Andrews-Garvan crank function. To underscore the ubiquity of such traces, we show that their generalizations give the Taylor coefficients of generic Jacobi forms with torsional divisor.
 \end{abstract}

\maketitle

\section{Introduction and Statement of Results}

For positive integers $k$ and $\Im(\tau)>0,$ the weight $2k$ Eisenstein series (see Ch. 1 of \cite{CBMS}) is
\begin{equation}\label{Eisenstein}
G_{2k}(\tau) := -\frac{B_{2k}}{2k}+2\sum_{n}\sigma_{2k-1}(n)q^n \ = \ \frac{(2k-1)!}{(2\pi i)^{2k}}\sum_{\substack{\omega \in \Z\oplus \Z\tau\\
\omega\neq 0}}\frac{1}{\omega^{2k}},
\end{equation}
where $B_{2k}$ is the  $2k$-th Bernoulli number, $\sigma_{\nu}(n):=\sum_{d\mid n}d^{\nu},$ and $q:=e^{2\pi i \tau}.$ The first few examples are
$$
G_2(\tau)=-\frac{1}{12}+2\sum_{n=1}^{\infty}\sigma_1(n)q^n,\ \ \
G_4(\tau)=\frac{1}{120}+2\sum_{n=1}^{\infty} \sigma_3(n)q^n,\ \ \
G_6(\tau)= -\frac{1}{252}+2\sum_{n=1}^{\infty}\sigma_5(n)q^n.
$$
Apart from $G_2,$ each $G_{2k}$ is a weight $2k$ holomorphic modular form on $\SL_2(\Z).$  
Quasimodular forms  are the functions in the polynomial ring
  (for example, see \cite{Zagier})
$$
\mathbb{C}[G_2,G_4,G_6]=\mathbb{C}[G_2, G_4, G_6, G_8, G_{10}, \dots].
$$

We study ``partition Eisenstein series'', which were introduced in \cite{AOS2}. These quasimodular forms are extensions of the classical Eisenstein series. 
To make this precise, recall that a {\it partition of  a non-negative integer $k$} (see \cite{Andrews} for background on partitions) is any nonincreasing sequence of positive integers 
$\lambda=(\lambda_1,\lambda_2,\dots, \lambda_s)$ that sum to $k$, denoted $\lambda\vdash k.$ Equivalently, we use the notation $\lambda=(1^{m_1},\dots,k^{m_k})\vdash k$, where $m_j$ is the multiplicity of $j.$ Furthermore, the {\it length} of $\lambda$ is $\ell(\lambda):=m_1+\dots+m_k.$
For such partitions, we
 define the weight $2k$ {\it partition Eisenstein series}\footnote{These $G_{\lambda}$ should not be mistaken for the partition Eisenstein series introduced by Just and Schneider \cite{JustSchneider}, which are semi-modular instead of quasimodular. We also note that these functions are scalar multiples of those introduced in \cite{AOS2}.}
\begin{equation}\label{primary}
\lambda=(1^{m_1}, 2^{m_2},\dots, k^{m_k}) \vdash k \ \ \ \ \ \longmapsto \ \ \ \ \  G_{\lambda}(\tau):= G_2(\tau)^{m_1}G_4(\tau)^{m_2}\cdots G_{2k}(\tau)^{m_k}.
\end{equation}
The Eisenstein series $G_{2k}(\tau)$ corresponds to the partition $\lambda=(k)$, as we have $G_{(k^1)}(\tau)=G_{2k}(\tau)^1.$

To define partition Eisenstein traces, suppose that $\phi: \mathcal{P}\mapsto \C$ is a function on partitions. For each positive integer $k$, its {\it partition Eisenstein trace}
is the weight $2k$ quasimodular form 
\begin{equation}\label{PartitionTrace}
\Tr_k(\phi;\tau):=\sum_{\lambda \vdash k} \phi(\lambda)G_{\lambda}(\tau).
\end{equation}
By convention, for $k=0$, we let $\Tr_0(\phi;\tau):=1$.

\begin{remark}
Such traces arise in recent work on MacMahon's sums-of-divisors $q$-series (see Theorem~1.4 of \cite{AOS}) and in an explicit proof of a claim from Ramanujan's ``lost notebook'' on $q$-series assembled from derivatives of theta functions (see Theorem~1.2 of \cite{AOS2}).
\end{remark}

We show that such traces give the elementary symmetric functions for the inverse points of 2-dimensional complex lattices
 $\Lambda_{\tau}:=\Z\oplus \Z\tau,$ where $\Im(\tau)>0.$
For each  $s\geq 0,$  we let
\begin{equation}
\Lambda_{\tau}(s):= \{\omega^2|\omega|^{2s}\ : \ \omega \in \Lambda_{\tau}\setminus \{0\}\}.
\end{equation}
In analogy with the classical elementary symmetric functions, for each  $k\geq 1,$ we define
\begin{equation}\label{symmetricsums}
e_{k}(\Lambda_{\tau}(s)):=\sum_{\substack{\omega_1,\dots, \omega_k \in \Lambda_{\tau}(s)\\ {\text{distinct}}}} \frac{1}{\omega_1 \cdots \omega_k}.
\end{equation}
For $\lambda=(1^{m_1}, \dots, k^{m_k})\vdash k,$ we define the function
\begin{equation}\label{LatticeFunction}
\phi_{\Lambda} ( \lambda):= {\color{black}\frac{(-1)^{\ell(\lambda)}}{\prod_{j=1}^k m_j! ((2j)!)^{m_j}}.}
\end{equation}

\begin{theorem}\label{Theorem1}
If $k$ is a positive integer, then the following are true.

\noindent
(1) We have that
$$e_{k}(\Lambda_{\tau}(0))=\displaystyle \lim_{s\to 0^+} e_{k}(\Lambda_{\tau}(s))
$$
is a weight $2k$ nonholomorphic modular form on $\SL_2(\Z).$

\noindent
(2) There is a degree $k$ isobaric polynomial\footnote{A polynomial is isobaric of degree $k$ if each of its monomials $X_1^{d_1}X_2^{d_2}\cdots X_k^{d_k}$ satisfies $d_1+2d_2+\dots+kd_k=k.$} $F_k(X_1, X_2,\dots, X_k)$ for which
$$e_{k}(\Lambda_{\tau}(0))={\color{black}(2\pi)^{2k}}F_k(G_2^{\star}(\tau), G_4(\tau),\dots, G_{2k}(\tau)),
$$
where $G_{2}^{\star}(\tau)$ is the nonholomorphic weight 2 modular form
$$
  G_2^{\star}(\tau)=\frac{1}{4\pi \Im(\tau)}+G_2(\tau).
$$  

\noindent
(3) The holomorphic part of
 $e_k(\Lambda_{\tau}(0))$ is given by
 
\[
F_k(G_2(\tau), G_4(\tau),\dots, G_{2k}(\tau)) = \Tr_k(\phi_{\Lambda};\tau).
\]
\end{theorem}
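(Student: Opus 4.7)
The plan is to express the elementary symmetric functions $e_k(\Lambda_\tau(s))$ in terms of power sums via the exponential generating function identity, evaluate each power sum as an Eisenstein series (with the weight-$2$ case supplied by the Hecke trick), and read off the isobaric polynomial $F_k$. Concretely, I would first introduce the power sums
$$P_j(s;\tau) \ := \ \sum_{\alpha\in\Lambda_\tau(s)}\frac{1}{\alpha^j} \ = \ \frac{1}{2}\sum_{\omega\in\Lambda_\tau\setminus\{0\}}\frac{1}{\omega^{2j}|\omega|^{2js}},$$
where the factor $\tfrac12$ accounts for the identification $\omega\sim-\omega$ implicit in $\omega\mapsto\omega^2|\omega|^{2s}$; for $j\ge 2$ these converge absolutely at $s=0$ and the second formula in \cref{Eisenstein} gives $P_j(0;\tau) = \tfrac{(-1)^{j}(2\pi)^{2j}}{2(2j-1)!}\,G_{2j}(\tau)$, while for $j=1$ the Hecke trick yields $\lim_{s\to 0^+} P_1(s;\tau) = -2\pi^2\,G_2^{\star}(\tau)$, which is the source of the nonholomorphic correction.

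Next, for any $s>0$ the product over the lattice converges absolutely and
$$\sum_{k\ge 0}e_k(\Lambda_\tau(s))\,z^k \ = \ \prod_{\alpha\in\Lambda_\tau(s)}\!\bigl(1+z/\alpha\bigr) \ = \ \exp\!\left(\sum_{j\ge 1}\tfrac{(-1)^{j-1}}{j}P_j(s;\tau)\,z^j\right),$$
which upon expanding the exponential and grouping terms by partition $\lambda=(1^{m_1},\dots,k^{m_k})\vdash k$ produces $e_k(\Lambda_\tau(s)) = \sum_{\lambda\vdash k}\prod_j \tfrac{1}{m_j!}\bigl(\tfrac{(-1)^{j-1}}{j}P_j(s;\tau)\bigr)^{m_j}$. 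Taking $s\to 0^+$, using the identity $j(2j-1)!=(2j)!/2$, and the fact that $\sum_j 2jm_j=2k$ and $\sum_j m_j=\ell(\lambda)$, the constants collapse to
$$e_k(\Lambda_\tau(0)) \ = \ (2\pi)^{2k}\!\sum_{\lambda\vdash k}\frac{(-1)^{\ell(\lambda)}}{\prod_j m_j!\,((2j)!)^{m_j}}(G_2^{\star})^{m_1}G_4^{m_2}\cdots G_{2k}^{m_k} \ = \ (2\pi)^{2k}F_k(G_2^{\star},G_4,\dots,G_{2k}),$$
whose coefficients are exactly the values $\phi_\Lambda(\lambda)$ from \cref{LatticeFunction}. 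This establishes part~(2); part~(1) follows at once because the isobaric condition $\sum 2jm_j=2k$ makes each monomial a weight-$2k$ nonholomorphic modular form on $\SL_2(\Z)$. For part~(3), since $G_2^{\star}=G_2+1/(4\pi\Im(\tau))$ is the only nonholomorphic piece, the holomorphic projection is obtained by replacing $G_2^{\star}$ with $G_2$ throughout, yielding exactly $\Tr_k(\phi_\Lambda;\tau)$.

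The main technical obstacle is the passage from $s>0$ (where the generating function identity follows from $\log\prod(1+z/\alpha)=\sum\log(1+z/\alpha)$ and absolute convergence) to the limit $s\to 0^+$. I would handle this by noting that once $e_k(\Lambda_\tau(s))$ has been rewritten as a polynomial in $P_1(s;\tau),\dots,P_k(s;\tau)$, each $P_j$ extends holomorphically to a neighborhood of $s=0$ via the meromorphic continuation of the real-analytic Eisenstein series of weight $2j$; termwise limits then suffice and simultaneously supply the existence of the limit asserted in part~(1).
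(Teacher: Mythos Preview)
Your proof is correct and follows essentially the same strategy as the paper: both arguments rest on the exponential generating-function identity relating the $e_k$ to the power sums $P_j$, the identification of $P_j(0;\tau)$ with $G_{2j}$ (and with $G_2^{\star}$ via Hecke's trick when $j=1$), and then an expansion of the exponential over partitions (the paper phrases this as P\'olya's cycle-index formula, Lemma~\ref{PolyaGenFunction}). The main difference is packaging: the paper routes the computation through the Weierstrass $\sigma$-function, introducing the regularized product $\sigma_s(z;\tau)=z\prod_{w\in\Lambda_\tau(s)}(1-z^2/w)$, relating it to $\sigma(z;\tau)$, and then invoking the known Taylor expansion of $\sigma'/\sigma$ from Silverman to read off the power sums. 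Your version bypasses the $\sigma$-function entirely and works directly with the Newton identity $\prod_\alpha(1+z/\alpha)=\exp\bigl(\sum_j\tfrac{(-1)^{j-1}}{j}P_j z^j\bigr)$. This is slightly more self-contained (no citation to the $\sigma$-function literature is needed), while the paper's route has the advantage of making visible the classical object underlying the construction. Your handling of the $s\to 0^+$ limit---reduce to a polynomial in finitely many $P_j(s;\tau)$ and take termwise limits---is exactly what the paper does implicitly when it exchanges the limit with the Taylor expansion of $\sigma_s$.
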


\begin{remark}
The sum in (\ref{symmetricsums}) converges absolutely 
for $s>0$, but converges only conditionally when $s=0$. In Theorem \ref{Theorem1}, this parameter is employed using ``Hecke's trick'' (for example, see p. 84 of \cite{BFOR}) which gives the convergence of $G_2^{\star}(\tau).$  
\end{remark}

Traces of partition Eisenstein series also give the first explicit formulas for the generating functions of the even power moments of the celebrated Andrews-Garvan crank function \cite{AG, Garvan}, whose discovery confirmed a speculation of Dyson \cite{Dyson} 
on Ramanujan's partition congruences.
For a partition $\lambda,$ let $l(\lambda)$ be the largest part, and let $\omega(\lambda)=m_1$ the number of $1's$ in $\lambda.$ If $\mu(\lambda)$ denotes the number of parts of $\lambda$ larger than $\omega(\lambda),$ then the {\it crank} $c(\lambda)$ is defined by
$$
c(\lambda):=\begin{cases} l(\lambda) \ \ \ \ \ &{\text {\rm if}}\ \omega(\lambda)=0,\\
\mu(\lambda)-\omega(\lambda) \ \ \ \ \ &{\text {\rm if}}\ \omega(\lambda)>0.
\end{cases}
$$
For every integer $a$, Andrews and Garvan proved the remarkable identities
\begin{displaymath}
\begin{split}
 \# \{ \lambda \vdash 5n+4 \ : \ c(\lambda)\equiv a\! \! \! \! \pmod 5\}&=\frac{p(5n+4)}{5},\\
 \# \{ \lambda \vdash 7n+5 \ : \ c(\lambda)\equiv  a\! \! \! \! \pmod 7\}&=\frac{p(7n+5)}{7},\\
 \# \{ \lambda \vdash 11n+6 \ : \ c(\lambda)\equiv a\! \! \! \!\pmod{11}\}&=\frac{p(11n+6)}{11},\\
\end{split}
\end{displaymath}
providing a combinatorial explanation for Ramanujan's partition congruences
\begin{displaymath}
\begin{split}
p(5n+4)&\equiv 0\pmod 5,\\ 
p(7n+5)&\equiv 0\pmod 7,\\
p(11n+6)&\equiv 0\pmod{11}.
\end{split}
\end{displaymath}

In an important paper, Atkin and Garvan \cite{AtkinGarvan} discovered infinite families of relations between this crank and Dyson's rank function $r(\lambda):=l(\lambda)-\ell(\lambda).$ Central to their work are the even power moments of $M(m,n),$ the number of partitions of $n$ with crank $m.$
Namely, for positive integers $k,$ they defined
\begin{equation} \label{crankGen}
 C_{2k}(q):=\sum_{n\geq0}\sum_{m\in\mathbb{Z}}m^{2k}M(m,n)\,q^n.
 \end{equation}
 They found a recurrence relation for these generating functions, which allows them to conclude, for each $k$, that
 $$
 C_{2k}(q)=\frac{1}{(q;q)_{\infty}}\cdot F_{2k}(\tau),
 $$
 where
 $(q;q)_{\infty}:=\prod_{n=1}^{\infty}(1-q^n)$
 and $F_{2k}(\tau)$ is a mixed weight quasimodular form. Here we obtain explicit formulas for these even power moments; they arise from traces of partition Eisenstein series.
 
\begin{theorem}\label{Theorem2}
The generating function for the even power crank moments is
\begin{align*}
	\sum_{k\geq0}(-1)^kC_{2k}(q)\cdot\frac{X^{2k}}{(2k)!}
		=\frac{2\sin(\frac{X}{2})}{(q;q)_{\infty}}\cdot
		\sum_{k\geq0} (-1)^k\Tr_k(\phi_c;\tau)\cdot X^{2k-1},
\end{align*}
where 
$$
\phi_c((1^{m_1}, 2^{m_2},\dots, k^{m_k})):=\frac{1}{\prod_{j=1}^k m_j!\left((2j)!\right)^{m_j}}.
$$
\end{theorem}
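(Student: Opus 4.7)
The plan is to identify both sides of the identity as formal power series in $X$ via the two-variable Andrews--Garvan crank generating function and an explicit Taylor expansion of the Jacobi theta function $\vartheta_1$. First, I would invoke the standard product formula
\[
C(y,q) := \sum_{n\geq 0}\sum_{m\in\Z}M(m,n)y^m q^n = \frac{(q;q)_\infty}{(yq;q)_\infty(y^{-1}q;q)_\infty},
\]
and use the symmetry $M(m,n)=M(-m,n)$ so that specializing $y=e^{iX}$ recovers the LHS of Theorem~\ref{Theorem2}: $C(e^{iX},q) = \sum_{k\geq 0}(-1)^k C_{2k}(q)\,X^{2k}/(2k)!$. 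On the other hand, the Jacobi triple product $\vartheta_1(z,\tau) = 2\sin(\pi z)\,q^{1/8}(q;q)_\infty(yq;q)_\infty(y^{-1}q;q)_\infty$ (with $y=e^{2\pi iz}$), evaluated at $z=X/(2\pi)$, combined with $\eta(\tau)^3 = q^{1/8}(q;q)_\infty^3$, converts this into the theta quotient
\[
C(e^{iX},q) = \frac{2\sin(X/2)\,\eta(\tau)^3}{(q;q)_\infty\,\vartheta_1(X/(2\pi),\tau)}.
\]
Comparing with the claimed RHS of Theorem~\ref{Theorem2}, the problem reduces to proving the ``$\sigma$-function style'' identity
\[
\frac{\eta(\tau)^3}{\vartheta_1(X/(2\pi),\tau)} = \sum_{k\geq 0}(-1)^k\Tr_k(\phi_c;\tau)\,X^{2k-1}. \qquad (\star)
\]

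For $(\star)$, I would take the logarithmic derivative of the $\vartheta_1$ product to obtain the classical formula $\partial_z\log\vartheta_1(z,\tau) = \pi\cot(\pi z) + 4\pi\sum_{n\geq 1}\tfrac{q^n}{1-q^n}\sin(2\pi n z)$, and Taylor-expand around $z=0$. The cotangent contributes $\tfrac{1}{z} + \sum_{k\geq 1}\tfrac{(2\pi i)^{2k}B_{2k}}{(2k)!}z^{2k-1}$, while interchanging the sums in the second term and using $\sum_{n\geq 1}\tfrac{n^{2k-1}q^n}{1-q^n}=\sum_N\sigma_{2k-1}(N)q^N$ produces the non-constant part of $G_{2k}(\tau)$. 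Combining via the normalization $G_{2k}=-B_{2k}/(2k)+2\sum_N\sigma_{2k-1}(N)q^N$ yields the compact formula
\[
\partial_z\log\vartheta_1(z,\tau) = \frac{1}{z} + \sum_{k\geq 1}\frac{(-1)^{k+1}(2\pi)^{2k}}{(2k-1)!}G_{2k}(\tau)\,z^{2k-1},
\]
and integrating (with constant of integration fixed by $\vartheta_1'(0,\tau) = 2\pi\eta(\tau)^3$) followed by exponentiation gives
\[
\vartheta_1(z,\tau) = 2\pi\eta(\tau)^3\,z\exp\!\left(\sum_{k\geq 1}\frac{(-1)^{k+1}(2\pi)^{2k}G_{2k}(\tau)}{(2k)!}z^{2k}\right).
\]

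Setting $z=X/(2\pi)$ and inverting then reduces $(\star)$ to $\tfrac{\eta(\tau)^3}{\vartheta_1(X/(2\pi),\tau)} = \tfrac{1}{X}\exp\bigl(\sum_{k\geq 1}\tfrac{(-1)^k G_{2k}(\tau)}{(2k)!}X^{2k}\bigr)$. The exponential formula for partitions $\exp(\sum_j b_j) = \sum_{m_1,m_2,\dots\geq 0}\prod_j b_j^{m_j}/m_j!$, applied with $b_j = (-1)^j G_{2j}(\tau)X^{2j}/(2j)!$, re-indexes the expansion by partitions $\lambda = (1^{m_1},\dots,k^{m_k})\vdash k$. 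Using $\prod_j(-1)^{jm_j} = (-1)^{\sum_j jm_j} = (-1)^k$, each $\lambda\vdash k$ contributes $(-1)^k\phi_c(\lambda)G_\lambda(\tau)X^{2k}$, which sums to $(-1)^k\Tr_k(\phi_c;\tau)X^{2k}$; the prefactor $1/X$ shifts this to $X^{2k-1}$, establishing $(\star)$.

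The main obstacle is the bookkeeping in the $\log\vartheta_1$ expansion --- ensuring that all signs and $(2\pi)^{2k}$ factors align with the paper's normalization of $G_{2k}$. Once the compact formula for $\vartheta_1$ is in hand, the rest is a clean partition-indexed repackaging via the exponential formula, and Theorem~\ref{Theorem2} follows immediately from the theta-quotient form of $C(e^{iX},q)$.
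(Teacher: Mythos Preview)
Your argument is correct. Both your proof and the paper's start from the Andrews--Garvan product, end with the exponential/cycle-index expansion, and differ only in the middle step: the paper sets $z=e^{2iX}$, rewrites the product as $\frac{1}{(q;q)_\infty}\prod_j\bigl[1+\tfrac{4(\sin^2X)q^j}{(1-q^j)^2}\bigr]^{-1}$, and then invokes two auxiliary lemmas --- an identity from \cite{AOS2} expressing this product as $\exp\bigl(2\sum_k\tfrac{\mathbf S_{2k-1}(q)}{(2k)!}(-4X^2)^k\bigr)$, and the $\mathrm{sinc}$ formula $\tfrac{\sin X}{X}=\exp\bigl(\sum_k\tfrac{(-4)^kB_{2k}}{(2k)(2k)!}X^{2k}\bigr)$ --- to split off the Bernoulli part of $\mathbf S_{2k-1}=\tfrac{B_{2k}}{4k}+\tfrac12 G_{2k}$ and isolate the Eisenstein exponential. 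You instead recognize the product immediately as $\tfrac{2\sin(X/2)\,\eta^3}{(q;q)_\infty\,\vartheta_1(X/(2\pi),\tau)}$ via the Jacobi triple product, and then read off the Eisenstein exponential from the standard logarithmic-derivative expansion of $\vartheta_1$. Your route is essentially the $\Theta$/$\sigma$-function argument the paper uses for Theorems~\ref{Theorem1} and~\ref{Theorem3} (cf.\ (\ref{ThetaSigma})), transplanted to the crank setting; it is self-contained and bypasses the two auxiliary lemmas, at the cost of invoking the $\vartheta_1$ machinery explicitly. The paper's version stays with Lambert series throughout and is marginally more elementary, but requires importing the key product identity from \cite{AOS2}.
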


As a
consequence, we obtain explicit formulas for the even crank moments.

\begin{corollary}\label{Cor1} If $k$ is a positive integer, then we have
$$
	C_{2k}(q)=\frac{1}{(q;q)_\infty}\cdot\sum_{n=0}^{k} \frac{(2k)_{2k-2n}}{4^{n}\cdot(2n+1)}\cdot \Tr_{k-n}(\phi_{c};\tau),
$$	
where $(x)_m:=x(x-1)\cdots (x-m+1).$
\end{corollary}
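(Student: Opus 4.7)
The corollary follows from Theorem~\ref{Theorem2} by extracting the coefficient of $X^{2k}$ on each side of the generating function identity, so my plan is simply to carry out this extraction and simplify the resulting falling factorials.

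First, I would expand $2\sin(X/2)$ as a power series in $X$. From the Taylor series for sine, one has
\[
2\sin\!\bigl(X/2\bigr)=\sum_{j\geq 0}\frac{(-1)^{j}\,X^{2j+1}}{4^{j}(2j+1)!},
\]
where the factor $4^{j}$ arises from the $2^{2j+1}$ in $(X/2)^{2j+1}$ combined with the leading $2$. I would then form the Cauchy product of this series with $\sum_{m\geq 0}(-1)^{m}\Tr_{m}(\phi_{c};\tau)\,X^{2m-1}$. Since a term of degree $2j+1$ in the sine expansion multiplies a term of degree $2m-1$ from the trace series, the constraint $(2j+1)+(2m-1)=2k$ forces $m=k-j$, and the coefficient of $X^{2k}$ on the right-hand side of Theorem~\ref{Theorem2} equals
\[
\frac{1}{(q;q)_{\infty}}\sum_{j=0}^{k}\frac{(-1)^{j}}{4^{j}(2j+1)!}\cdot(-1)^{k-j}\,\Tr_{k-j}(\phi_{c};\tau)
=\frac{(-1)^{k}}{(q;q)_{\infty}}\sum_{j=0}^{k}\frac{\Tr_{k-j}(\phi_{c};\tau)}{4^{j}(2j+1)!}.
\]

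Next I would read off the coefficient of $X^{2k}$ on the left-hand side, which is simply $(-1)^{k}C_{2k}(q)/(2k)!$ by definition. Equating the two and multiplying through by $(-1)^{k}(2k)!$ yields
\[
C_{2k}(q)=\frac{1}{(q;q)_{\infty}}\sum_{n=0}^{k}\frac{(2k)!}{4^{n}(2n+1)!}\,\Tr_{k-n}(\phi_{c};\tau),
\]
after relabelling $j=n$. To finish, I would rewrite the scalar coefficient in the form stated: since $(2k)_{2k-2n}=(2k)(2k-1)\cdots(2n+1)=(2k)!/(2n)!$, one has
\[
\frac{(2k)!}{(2n+1)!}=\frac{(2k)!}{(2n+1)\cdot(2n)!}=\frac{(2k)_{2k-2n}}{2n+1},
\]
which produces exactly the coefficient $(2k)_{2k-2n}/(4^{n}(2n+1))$ appearing in the corollary.

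There is no substantial obstacle here beyond careful bookkeeping of signs and the conversion between $(2k)!/(2n+1)!$ and the falling-factorial notation; the entire content of the corollary is encoded in Theorem~\ref{Theorem2}, and it is only a matter of expanding $2\sin(X/2)$ and performing the Cauchy product. If I wanted to be slightly more formal, I could first observe that both sides of Theorem~\ref{Theorem2} are power series in $X$ whose coefficients are $q$-series converging on $|q|<1$, so termwise comparison is legitimate.
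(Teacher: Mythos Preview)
Your proof is correct and follows essentially the same approach as the paper: expand $2\sin(X/2)$ as a Taylor series, form the Cauchy product with the trace series from Theorem~\ref{Theorem2}, extract the coefficient of $X^{2k}$, and rewrite $(2k)!/(2n+1)!$ in terms of the falling factorial. The paper presents this more tersely, but the argument is identical.
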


\begin{examples} Here we offer two examples of Corollary~\ref{Cor1}. \newline \noindent
			(1) For $k= 3$, Corollary~\ref{Cor1} gives
			$$C_6(q)=\frac{1}{(q;q)_\infty}\left(\frac{1}{448}\Tr_0(\phi_c;\tau)+\frac{3}{8}\Tr_{1}(\phi_{c};\tau)+30\Tr_{2}(\phi_{c};\tau)+720\Tr_{3}(\phi_{c};\tau)\right).$$
			(2) For $k= 4$, Corollary~\ref{Cor1} gives
			$$C_8(q)=\frac{1}{(q;q)_\infty}\left(\frac{1}{2304}\Tr_0(\phi_c;\tau)+\frac{1}{8}\Tr_{1}(\phi_{c};\tau)+21 \Tr_{2}(\phi_{c};\tau)+1680\Tr_{3}(\phi_{c};\tau)+40320\Tr_{4}(\phi_{c};\tau)\right).$$
\end{examples}

\begin{remark}~ Atkin and Garvan expressed $C_2(q), C_4(q), C_6(q),$ and $C_8(q)$ (see (4.7) of \cite{AtkinGarvan})  in terms of the Lambert series
\begin{equation}\label{Sj} 
	\mathbf{S}_{2k-1}(q):=\sum_{m\geq1}\frac{m^{2k-1}q^m}{1-q^m} 
	= \frac{B_{2k}}{4k} +\frac{1}{2}G_{2k}(\tau). 
\end{equation}
Furthermore, they use induction  (see (4.8) of \cite{AtkinGarvan}) to show that each $C_{2k}(q)$ is a polynomial in such Lambert series. 
Corollary~\ref{Cor1} gives explicit formulas for these polynomial representations. A straightforward modification, where one uses (\ref{Sj}), yields the explicit formula
$$C_{2k}(q)
=\frac{(2k)!}{(q;q)_{\infty}} \cdot \sum_{\lambda\vdash k} \prod_{j=1}^k \frac1{m_j!} \left(\frac2{(2j)!}\right)^{m_j}\cdot \mathbf{S}_{\lambda}(q),$$
where for a partition $\lambda=(1^{m_1}, 2^{m_2},\dots, k^{m_k})\vdash k$ we let
$$
\mathbf{S}_{\lambda}(q):=\prod_{j=1}^{k} \mathbf{S}_{2j-1}(q)^{m_j}.
$$
Furthermore, we note that Rhoades (see Theorem 2.1 of  \cite{Rhoades}) proved formulas which are equivalent to Corollary 1.3.
\end{remark}

Theorems~\ref{Theorem1} and \ref{Theorem2} both arise naturally in the context of Jacobi forms. Therefore, it is natural to ask whether traces of partition Eisenstein series arise generally in this wider context. We prove that this is indeed the case.  Partition traces of more general Eisenstein series give Taylor coefficients of generic Jacobi forms (for background, see \cite{BFOR, EichlerZagier}) with ``torsional divisor.''
To  make this precise, we briefly recall some terminology. 

\begin{definition}A holomorphic function $F(z;\tau)$ on $\mathbb{C}\times \mathbb{H}$ is a  {\it Jacobi form for $\SL_2(\Z)$ of weight $k$ and index $m$} if it 
satisfies the following conditions:

\noindent
(1) For all $\gamma=\left(\begin{smallmatrix}a&b\\c&d\end{smallmatrix}\right)\in \SL_2(\Z),$ we have the modular transformation
\[F\left(\frac{z}{c\tau+d};\frac{a\tau+b}{c\tau+d}\right) \ = \ (c\tau+d)^k\exp\left(2\pi i\cdot \frac{mcz^2}{c\tau+d}\right) F(z;\tau).\]

\noindent
(2)  For all integers $a,b$, we have the elliptic transformation
\[
F(z+a\tau+b;\tau)  \ = \  \exp\big(-2\pi i m(a^2 \tau+2az)\big)F(z;\tau).
\]

\noindent
(3) The 
Fourier expansion of $F(z;\tau)$ is given by
\[F(z;\tau)=\sum_{n\geq 0}\sum_{r^2\leq 4mn} b(n,r)q^nu^r,\]
where $b(n,r)$ are complex numbers and $u:=e^{2\pi i z}$.
\end{definition}

\begin{TwoRemarks}~

\noindent
(1) To define Jacobi forms on congruence subgroups $\Gamma\subseteq \SL_2(\Z)$, one modifies (3) by requiring that $F$ has a similar Fourier expansion at each of the cusps of $\Gamma.$ 

\noindent
(2) From the definition of a Jacobi form, it is clear that if two Jacobi forms are multiplied, their weights and indexes are added. Meromorphic Jacobi forms of index $0$ are elliptic functions. 
\end{TwoRemarks}

We consider \emph{meromorphic Jacobi forms} (for example, see Chapter 11 of \cite{BFOR}), which are mild extensions of the holomorphic Jacobi forms defined above, where poles are permitted in $z$, and where the expansions in (3) allow for finite order poles in $\tau$ at cusps. The {\it divisor} of such a form is the formal sum 
$$\Div(F)=\sum_{x\in \C/\Lambda_\tau} a_x\cdot (x),
$$
where each $a_x$ is the order of $F$ at $x$. We note that divisors only have a finite number of nonvanishing summands. Furthermore, we say that a point $x=a\tau+b\in \C/\Lambda_\tau$ is a {\it torsion point} if  $a$ and $b$ are rational.
We prove that if  $\Div(F)$ is torsional (i.e. supported at torsion points), then the Taylor coefficients of $F$ with respect to $z$ are given as traces of partition Eisenstein series. 

To define this generalization, we require higher level Eisenstein series.  Specifically, in Section \ref{SecJacobi} we define an Eisenstein series $G_{k,D}(\tau)$, for each positive integer $k$ and formal divisor $D$ on $\C/\Lambda_\tau$. If $D=\Div(F),$  then these Eisenstein series have the same (or lower) level as $F$. 
If $\lambda=(1^{m_1},\dots,k^{m_k})\vdash k,$ then in analogy with (\ref{primary})  we define the {\it partition Eisenstein series for the divisor $D$} by 
\begin{equation}\label{highereisenstein}
G_{D,\lambda}(\tau):=G_{1,D}^{m_1}(\tau)G_{2,D}^{m_2}(\tau)\dots G_{k,D}(\tau)^{m_k}.
\end{equation}
We note that this notation allows for odd weight Eisenstein series, which might exist depending on $D$. Hence $G_{D, \lambda}(\tau)$ is a weight $k$ form, rather than a weight $2k$ form in the case of (\ref{primary}).
In turn, we define the {\it partition Eisenstein trace for the divisor $D$} by
\begin{equation}\label{PartitionTraceDiv}
\Tr_{k}(D,\phi;\tau):=\sum_{\lambda \vdash k} \phi(\lambda)G_{D, \lambda}(\tau).
\end{equation}

We now present our result about
 meromorphic Jacobi forms with torsional divisor.

\begin{theorem}\label{Theorem3}
Suppose $F(z;\tau)$ is a meromorphic Jacobi form of weight $k$ and index $m$, and torsional divisor $D=\Div(F)$, and let $a$ be the order of $F(z;\tau)$ at $z=0$. Then there is a unique weakly holomorphic modular form $f_F(\tau)$ of weight $k+a$ for which
\[
F(z;\tau) \ = \ f_F(\tau) \cdot 
\sum_{t\geq 0} \Tr_{t}(D,\phi_{J};\tau)\cdot {\color{black}(2\pi i z)}^{t+a},
\]
where for partitions $\lambda=(1^{m_1}, 2^{m_2},\dots, t^{m_t})\vdash t$ we let
\[
\phi_{J}(\lambda) := \frac{(-1)^{\ell(\lambda)}}{\prod_{j=1}^t m_j!\left(j!\right)^{m_j}}.
\]
\end{theorem}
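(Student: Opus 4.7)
The plan is to extract the leading $(2\pi iz)^a$ factor from $F(z;\tau)$ and then recognize the remaining Taylor series as the exponential of a power series in $z$ whose coefficients are (up to signs and factorials) the Eisenstein series $G_{k,D}(\tau)$; the partition structure of the statement then drops out of the standard exponential-sum identity. Concretely, I set $h(z;\tau):=F(z;\tau)/(2\pi iz)^a$, which by the hypothesis that $a$ is the order of $F$ at $z=0$ is holomorphic and non-vanishing at $z=0$, and define $f_F(\tau):=h(0;\tau)$. It suffices to prove that
\[
h(z;\tau)\ =\ f_F(\tau)\cdot\exp\!\left(-\sum_{j\geq 1}\frac{G_{j,D}(\tau)}{j!}\,(2\pi iz)^j\right),
\]
because the standard identity $\exp\bigl(\sum_{j\geq 1}\alpha_j z^j\bigr)=\sum_{k\geq 0}z^k\sum_{\lambda\vdash k}\prod_j\alpha_j^{m_j}/m_j!$, applied with $\alpha_j=-G_{j,D}(\tau)/j!$, rewrites the right-hand side as $f_F(\tau)\sum_{k\geq 0}\Tr_k(D,\phi_J;\tau)(2\pi iz)^k$, with the sign $(-1)^{\ell(\lambda)}$ and denominator $\prod_j m_j!(j!)^{m_j}$ of $\phi_J$ appearing automatically. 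Multiplying through by $(2\pi iz)^a$ then yields the theorem.

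To establish the displayed identity, I would differentiate it and work instead with the logarithmic derivative
\[
g(z;\tau)\ :=\ \frac{1}{2\pi i}\,\partial_z\log F(z;\tau)\,-\,\frac{a}{2\pi iz},
\]
which is holomorphic at $z=0$ and, because $D=\Div(F)$ is torsional, is elsewhere meromorphic with a simple pole of residue $a_x$ at each $x+\omega$ for $x\in\mathrm{supp}(D)$ and $\omega\in\Lambda_\tau$. The elliptic transformation of $F$ prescribes how $g$ behaves under $z\mapsto z+\omega$, so $g$ is, up to a linear-in-$z$ correction dictated by the index $m$, a Mittag-Leffler / Weierstrass $\zeta$-sum of contributions from the torsion cosets appearing in $D$. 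Expanding this sum around $z=0$, each $\zeta$-contribution produces a Taylor coefficient that is a shifted lattice sum over $\Lambda_\tau+x$, and by construction of the Eisenstein series $G_{k,D}(\tau)$ in Section~\ref{SecJacobi} the total expansion reads
\[
g(z;\tau)\ =\ -\sum_{k\geq 1}\frac{G_{k,D}(\tau)}{(k-1)!}\,(2\pi iz)^{k-1}.
\]
Integrating from $z=0$ and exponentiating then recovers the identity of the previous paragraph.

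The remaining properties of $f_F(\tau)$ are routine. Uniqueness is immediate from comparing leading $z$-coefficients. Substituting the modular transformation of $F$ into the derived expansion and matching the leading $z^a$ terms gives the weight-$(k+a)$ modular transformation law of $f_F$, since the index-$m$ exponential factor $\exp\bigl(2\pi imcz^2/(c\tau+d)\bigr)$ is $1+O(z^2)$ at $z=0$; weak holomorphicity in $\tau$ follows from the cusp condition built into the Fourier expansion of $F$. I expect the main obstacle to be the $\zeta$-function / Eisenstein identification in the second paragraph: the $k=1,2$ terms are only conditionally convergent and demand the Hecke-trick regularization already used in part~(2) of Theorem~\ref{Theorem1}, and the index-$m$ linear correction must be shown to combine cleanly with the weight-$2$ summand of the expansion.
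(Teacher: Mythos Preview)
Your approach is correct and reaches the same exponential identity
\[
h(z;\tau)=f_F(\tau)\exp\!\left(-\sum_{j\geq 1}\frac{G_{j,D}(\tau)}{j!}(2\pi iz)^j\right)
\]
that the paper establishes, but by a different route. The paper does not analyze the logarithmic derivative of $F$ directly; instead it first proves a factorization lemma (Proposition~\ref{JacobiDecomp}) expressing
\[
F(z;\tau)=f_F(\tau)\cdot\Theta(z;\tau)^{a_{x_0}}\prod_{i=1}^N\frac{\Theta(z-x_i;\tau)^{a_{x_i}}}{\Theta(-x_i;\tau)^{a_{x_i}}},
\]
with $f_F$ exhibited as a modular form of weight $k+a$ via the classical divisor theory of elliptic functions. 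Because $G_{k,x}(\tau)$ is \emph{defined} in Section~\ref{SecJacobi} as the $k$-th Taylor coefficient of $-\log\Theta$ at $z=x$, the exponential identity then follows almost tautologically from the product. Your Mittag--Leffler argument is essentially the logarithmic derivative of this same factorization, phrased intrinsically in terms of $F$; the identification of the poles of $g$ with shifted $\zeta$-contributions is precisely what the $\Theta$-product accomplishes on the other side. The paper's organization has the advantage that the modularity and weak holomorphicity of $f_F$ come for free from Proposition~\ref{JacobiDecomp}, and the $k=1,2$ regularization and index-$m$ correction you correctly flag as obstacles are absorbed into the $\Theta$-based definition of $G_{k,D}$ and handled once in Lemmas~\ref{TorsionalEisenstein} and~\ref{EisensteinDStuff}; your route is more self-contained in $F$ but leaves those as genuine analytic details to verify.
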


\begin{TwoRemarks}~

\noindent
(1) A {\it weakly holomorphic modular form} is a meromorphic form whose poles (if any) are supported at cusps.
The modular form $f_F$ in Theorem~\ref{Theorem3} is explicitly given in Proposition~\ref{JacobiDecomp}.

\noindent
(2) One way to interpret Theorem~\ref{Theorem3} is that the generating function for traces of partition Eisenstein series for $\Div(F)=D$ is the explicit formula for $F(z;\tau)/f_F(\tau).$
\end{TwoRemarks}

This paper is organized as follows. In Section 2 we recall P\'olya's theory of cycle index polynomials for symmetric groups, the key tool for all of the results in this note. In Section 3 we prove Theorem~\ref{Theorem1} by applying these results to the Weierstrass $\sigma$-function. In Section 4 (resp. Section 5) we prove Theorem~\ref{Theorem2} (resp. Theorem~\ref{Theorem3}) with P\'olya's formulas after recalling essential preliminaries.

\section*{Acknowledgements}
\noindent The authors thank the referee, Kathrin Bringmann and Badri Pandey for comments that improved this paper.
 The third author thanks the Thomas Jefferson Fund and the NSF
(DMS-2002265 and DMS-2055118). The fourth author is grateful for the support of a Fulbright Nehru Postdoctoral Fellowship.

\section{P\'olya's cycle index polynomials}

The structure of traces of partition Eisenstein series arises from the classical theory of the symmetric group, and their connection to integer partitions. Namely, the key tool is P\'olya's theory of cycle index polynomials  (for example, see \cite{Stanley}). Recall that  a partition $\lambda=(\lambda_1,\dots,\lambda_{\ell(\lambda)})\vdash k$ or $(1^{m_1},\dots,k^{m_k})\vdash k$, labels a conjugacy class by cycle type. Moreover, the number of permutations in $\mathfrak{S}_k$ of cycle type $\lambda$ is
is $k!/z_{\lambda}$, where 
 $z_{\lambda}:=1^{m_1}\cdots k^{m_k}m_1!\cdots m_k!$. The {\it cycle index polynomial} for the symmetric group $\mathfrak{S}_k$ is given by
\begin{align} \label{CIF}
Z(\mathfrak{S}_k)&=\sum_{\lambda\vdash k}\frac1{z_{\lambda}}\prod_{j=1}^{\ell(\lambda)}x_{\lambda_j}
=\sum_{\lambda\vdash k}\prod_{j=1}^k\frac1{m_j!}\left(\frac{x_j}{j}\right)^{m_j}.
\end{align}
We require the following generating function for these polynomials in $k$-aspect.
\begin{lemma}[Example 5.2.10 of \cite{Stanley}]\label{PolyaGenFunction} As a power series in $y$, the generating function for the cycle index polynomials satisfies
$$\sum_{k\geq0}Z(\mathfrak{S}_k)\,y^k=\exp\left(\sum_{j\geq1}x_j\cdot \frac{y^j}j\right).$$
\end{lemma}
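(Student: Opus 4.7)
The plan is to prove the identity by direct expansion of the right-hand side. Since $\exp(A+B) = \exp(A)\exp(B)$ for formal power series, I would first rewrite
\[
\exp\left(\sum_{j\geq 1} \frac{x_j y^j}{j}\right) \ = \ \prod_{j\geq 1} \exp\left(\frac{x_j y^j}{j}\right).
\]
Then, applying the Taylor series for the exponential to each factor, I would get
\[
\prod_{j\geq 1} \exp\left(\frac{x_j y^j}{j}\right) \ = \ \prod_{j \geq 1} \sum_{m_j \geq 0} \frac{1}{m_j!}\left(\frac{x_j}{j}\right)^{m_j} y^{jm_j}.
\]

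Next, I would expand this infinite product as a sum over tuples $(m_1, m_2, m_3, \dots)$ of nonnegative integers with only finitely many nonzero entries. Each such tuple contributes the monomial $\prod_j \frac{1}{m_j!}(x_j/j)^{m_j} y^{jm_j}$. The key observation is the classical bijection between such tuples and partitions: a tuple $(m_1, m_2, \dots)$ corresponds to the partition $\lambda = (1^{m_1}, 2^{m_2}, \dots)$ of the integer $k := \sum_j j m_j$. Grouping terms by the value of $k$ and noting that $y^k = \prod_j y^{j m_j}$ whenever $\sum_j j m_j = k$, I obtain
\[
\prod_{j\geq 1} \sum_{m_j \geq 0} \frac{1}{m_j!}\left(\frac{x_j}{j}\right)^{m_j} y^{jm_j}
\ = \ \sum_{k \geq 0} y^k \sum_{\lambda \vdash k} \prod_{j=1}^k \frac{1}{m_j!}\left(\frac{x_j}{j}\right)^{m_j},
\]
and the inner sum is exactly $Z(\mathfrak{S}_k)$ by the definition in \eqref{CIF}.

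There is really no substantive obstacle here; the proof is a formal power series identity whose content is the bijection between partitions and multiplicity vectors. The only care required is justifying the rearrangement of the infinite product as a sum over tuples, but since we are working in the formal power series ring $\C[x_1, x_2, \dots][[y]]$ and only finitely many choices of $(m_1, m_2, \dots)$ contribute to each fixed power of $y$, convergence is not an issue. This lemma is a standard instance of the exponential formula in enumerative combinatorics, and a reader who wishes more context can consult Example 5.2.10 of \cite{Stanley}.
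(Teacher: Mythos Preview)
Your proof is correct and is the standard direct expansion argument. The paper does not actually supply a proof of this lemma; it simply cites Example~5.2.10 of \cite{Stanley}, so your argument in fact provides more detail than the paper itself.
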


\begin{example}
 Here are the first few examples of P\'olya's cycle index polynomials: 
$$Z(\mathfrak{S}_1)=x_1, \qquad Z(\mathfrak{S}_2)=\frac1{2!}(x_1^2+x_2), \qquad Z(\mathfrak{S}_3)=\frac1{3!}(x_1^3+3x_1x_2+2x_3).
$$
\end{example}

\section{Proof of Theorem~\ref{Theorem1}}

Theorem~\ref{Theorem1} follows from an application of the theory of  P\'olya's cycle index generating functions (i.e. Lemma~\ref{PolyaGenFunction}) to  Weierstrass's $\sigma$-function (for example, see Section I.5 of \cite{Silverman}).
We begin by considering the function 
\[
\sigma_s(z;\tau) := z\prod_{w\in \Lambda_\tau(s)} \left(1-\frac{z^2}{w}\right).
\]
By the Weierstrass Factorization Theorem, this product converges as long as the sum 
\[
 \sum_{w\in \Lambda_\tau(s)}\frac{1}{w}
\]
converges absolutely, which it does for $s>0.$ Since we always have $s>0,$ we may expand this expression as a Taylor series 
\begin{equation}\label{TaylorSigma s}
\sigma_s(z;\tau) = \sum_{k=0}^\infty (-1)^ke_k(\Lambda_{\tau}(s))z^{2k+1}. 
\end{equation}
We compare this function with the Weierstrass $\sigma$-function 
\begin{eqnarray}\label{sigmafunction}
\sigma(z,\tau):=z\prod_{\substack{w\in \Lambda_\tau\\ \Im (w)>0 \text{ or }w>0 }} \left(1-\frac{z^2}{w^2}\right)\exp\left(\tfrac{z^2}{w^2}\right).
\end{eqnarray}
Note that this product is taken over a half-lattice rather than the full lattice. 
Introducing an $s$-parameter and limit, we find that 
\begin{eqnarray*}\label{SigmaSigmaS}
\sigma(z,\tau) &=&\lim_{s\to 0^+} z\prod_{\substack{w\in \Lambda_\tau\\ \Im (w)>0 \text{ or }w>0 }}  \left(1-\frac{z^2}{w^2|w|^{2s}}\right)\exp\left(\tfrac{z^2}{w^2|w|^{2s}}\right)\\
&=&\lim_{s\to 0^+} \sigma_s(z,\tau)\exp\left( (2\pi i  z)^2 \frac{G_2^{\star}(\tau)}{2}\right).
\end{eqnarray*}
Here we use ``Hecke's trick'' (for example, see p. 84 of \cite{BFOR})
\[
\lim_{s\to 0^+} \sum_{\substack{w\in \Lambda_\tau\\ \Im (w)>0 \text{ or }w>0 }}\frac{1}{w^2|w|^{2s}}  
\ = \ \frac12 \lim_{s\to 0^+}\sum_{w \in \Lambda_\tau(s)} \frac{1}{w} \ = \  (2\pi i)^2 \frac{G_2^{\star}(\tau)}{2}.
\]

The logarithmic derivative of the $\sigma$ function (with respect to $z$) has the Taylor expansion
\[
\frac{\sigma'(z;\tau)}{\sigma(z;\tau)} = \frac{1}{z} -\sum_{k\geq2} \frac{G_{2k}(\tau)(2\pi i)^{2k}}{(2k-1)!}z^{2k-1}
\]
(see Prop. I.5.1 of \cite{Silverman}, where we note a difference in notation with our $G_{2k}(\tau)$ being $\frac{(2k-1)!}{(2\pi i)^{2k}}G_{2k}(\Lambda_\tau)$).
This gives us the exponential expansion of $\sigma$ as
 \[
 \sigma(z;\tau) = z\cdot \exp\left( -\sum_{k\geq 2}\frac{G_{2k}(\tau)}{(2k)!}(2\pi i z)^{2k}\right),
 \] 
 and so we have 
  \begin{equation}\label{SigmaSLimit}
\lim_{s\to 0^+} \sigma_s(z;\tau) = z\cdot \exp\left(-\frac{G^{\star}_2(\tau)}{2}(2\pi i z)^2 -\sum_{k\geq 2}\frac{G_{2k}(\tau)}{(2k)!}(2\pi i z)^{2k}\right).
 \end{equation}
Equating this expression with (\ref{TaylorSigma s}), we see that the functions $e_k(\Lambda_{\tau}(0))$ are sums of products of the $G_{2k}$ and $G_2^{\star}$ Eisenstein series. Replacing $G_2^{\star}$ with $G_2$ yields quasimodular forms.  This proves (1) and (2), where the polynomial $F_k(X_1, X_2,\dots, X_k)$ arises from these expressions.

To prove (3), which requires the derivation of $\phi_{\Lambda},$ one simply applies
Lemma~\ref{PolyaGenFunction} to  the expression 
   \[
z\cdot \exp\left( -\sum_{k\geq 1}\frac{G_{2k}(\tau)}{(2k)!}\cdot(2\pi i z)^{2k}\right) =z\cdot\sum_{k\geq 1} (2\pi i z)^{2k}
\sum_{\lambda=(1^{m_1},\dots, k^{m_k})\vdash k} (-1)^{\ell(\lambda)} \prod_{j=1}^k \frac{1}{m_j!}\left(\frac{G_{2j}(\tau)}{(2j)!}\right)^{m_j},
 \]
 where $x_j=\frac{-G_{2j}(\tau)}{2(2j-1)!}$ and $y=(2\pi i\, z)^2.$

\section{Proof of Theorem~\ref{Theorem2}}
Here we prove Theorem~\ref{Theorem2}, which gives the generating function for the even crank moments in terms of traces of partition Eisenstein series.
\subsection{Two lemmas}
For each positive integer $k$, we consider the Lambert series (see (\ref{Sj}))
$$
	\mathbf{S}_{2k-1}(q)=\sum_{m\geq1}\frac{m^{2k-1}q^m}{1-q^m} 
	= \frac{B_{2k}}{4k} +\frac{1}{2}G_{2k}(\tau). 
$$
The second expression follows from (\ref{Eisenstein}).
We require the following fact which is explained in the proof of Theorem 1.2 of \cite{AOS2} (see equation (3.6) of \cite{AOS2}).
\begin{lemma}\label{identity1}
As a power series in $X$, we have that
\begin{equation*}
	\exp\left(-2\sum_{k\geq1}\frac{\mathbf{S}_{2k-1}(q)}{(2k)!} (-4X^2)^k\right)=
	\prod_{j\geq1} \left[1+\frac{4(\sin^2X)q^j}{(1-q^j)^2}\right].
\end{equation*}
\end{lemma}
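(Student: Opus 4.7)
The plan is to prove the identity by taking logarithms of both sides and matching the resulting double $q$-series term by term. On the left side, I would first unfold the Lambert series
\[
\mathbf{S}_{2k-1}(q)=\sum_{m,j\geq 1}m^{2k-1}q^{mj},
\]
interchange the order of summation so that the sum over $k$ is innermost, and recognize it as a cosine expansion:
\[
\sum_{k\geq 1}\frac{(-4m^2 X^2)^k}{(2k)!}=\cos(2mX)-1=-2\sin^2(mX).
\]
Multiplying by $-2/m$ and summing over $m,j$ collapses the left-hand exponent into the compact form
\[
-2\sum_{k\geq 1}\frac{\mathbf{S}_{2k-1}(q)}{(2k)!}(-4X^2)^k \ = \ 4\sum_{m,j\geq 1}\frac{\sin^2(mX)}{m}\,q^{mj}.
\]

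The second step is the only real algebraic input needed: the factorization
\[
(1-q^j)^2+4\sin^2(X)\,q^j \ = \ (1-e^{2iX}q^j)(1-e^{-2iX}q^j),
\]
which follows immediately from $4\sin^2 X = 2-e^{2iX}-e^{-2iX}$. Dividing through by $(1-q^j)^2$ and taking the infinite product in $j$ gives
\[
\prod_{j\geq 1}\left[1+\frac{4\sin^2(X)\,q^j}{(1-q^j)^2}\right] \ = \ \prod_{j\geq 1}\frac{(1-e^{2iX}q^j)(1-e^{-2iX}q^j)}{(1-q^j)^2}.
\]
Taking the logarithm and applying $\log(1-z)=-\sum_{m\geq 1}z^m/m$ to each factor, the sum telescopes into
\[
-\sum_{j,m\geq 1}\frac{q^{mj}}{m}\bigl(e^{2imX}+e^{-2imX}-2\bigr) \ = \ 4\sum_{j,m\geq 1}\frac{\sin^2(mX)}{m}\,q^{mj},
\]
which matches the closed form of the left-hand exponent computed above. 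Exponentiating then yields the lemma.

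There is no serious obstacle to this approach. The only bookkeeping to verify is that the interchanges of summation, the formal logarithm of the infinite product, and its term-by-term Taylor expansion are legitimate as identities in $\Z[[q]][[X]]$; this is routine because, for any fixed power of $X$, only finitely many $(m,j)$ pairs contribute terms of bounded $q$-valuation, so all manipulations converge formally. The substantive content of the argument is entirely captured by the trigonometric factorization displayed above, which is the natural cyclotomic-style identity linking $\sin^2 X$ to the pair of roots $e^{\pm 2iX}$.
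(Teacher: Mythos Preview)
Your argument is correct. The factorization
\[
(1-q^j)^2+4\sin^2(X)\,q^j=(1-e^{2iX}q^j)(1-e^{-2iX}q^j)
\]
is exactly right, and after taking logarithms the two sides match as you compute; the formal justification in $\Z[[q]][[X]]$ is indeed routine.

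Regarding comparison: the paper does not actually prove this lemma. It simply cites the proof of Theorem~1.2 (specifically equation~(3.6)) of \cite{AOS2} and moves on. So your write-up is not an alternative to the paper's argument but a self-contained substitute for an external citation. Your route---expand the Lambert series, sum over $k$ to produce $\cos(2mX)-1=-2\sin^2(mX)$, and match against the logarithm of the factored product---is the natural direct proof and is presumably close in spirit to what appears in \cite{AOS2}, since the same factorization is implicitly used later in the paper when the crank generating function \eqref{AGGenFunction} is rewritten with $z=e^{2iX}$ and $-2\cos(2X)=-2+4\sin^2 X$.
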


\noindent
We also require the following convenient generating function for normalized Bernoulli numbers (for example, see equation (1.518.1) of \cite{GR}).
\begin{lemma} \label{sinc} 
	As a power series in $X$, we have that
	\begin{displaymath}
		\begin{split}
			\frac{\sin(X)}{X}=\exp\left(\sum_{k\geq1}\frac{(-4)^kB_{2k}}{(2k)(2k)!}\cdot X^{2k}\right).
		\end{split}
	\end{displaymath}
\end{lemma}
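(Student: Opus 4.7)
The plan is to reduce this to the classical Bernoulli series expansion of $\cot X$ and then integrate. Taking the logarithm of both sides, the claim is equivalent to
\[
\log\!\left(\frac{\sin X}{X}\right) \;=\; \sum_{k\geq 1}\frac{(-4)^k B_{2k}}{(2k)(2k)!}\,X^{2k},
\]
and since both sides vanish at $X=0$, it suffices to prove this after formal differentiation in $X$. The left side differentiates to $\cot X - 1/X$, so I need to establish
\[
\cot X - \frac{1}{X} \;=\; \sum_{k\geq 1}\frac{(-4)^k B_{2k}}{(2k)!}\,X^{2k-1}.
\]

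For this I would invoke the standard exponential generating function $\frac{t}{e^t-1}=\sum_{n\geq 0}\frac{B_n}{n!}t^n$ for the Bernoulli numbers, and apply it to $t=2iX$ using the identity
\[
X\cot X \;=\; iX \;+\; \frac{2iX}{e^{2iX}-1}.
\]
Substituting the series expansion into the right-hand side, the odd-index terms cancel the initial $iX$ (since $B_1=-\tfrac12$ and $B_{2k+1}=0$ for $k\geq 1$), leaving the classical formula
\[
X\cot X \;=\; \sum_{k\geq 0}\frac{(-4)^k B_{2k}}{(2k)!}\,X^{2k}.
\]
Isolating the $k=0$ term (which contributes $1$) and dividing by $X$ yields the desired identity for $\cot X - 1/X$.

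Integrating termwise from $0$ to $X$ and exponentiating then gives the lemma. The only minor subtlety is making sure the manipulations are performed at the level of formal power series in $X$; since all the series involved converge in a neighborhood of $0$ and the identity is claimed as a formal power series identity, this presents no real obstacle. In fact, the bulk of the argument is bookkeeping, and the single nontrivial input is the classical Bernoulli expansion of $X\cot X$, which is where I would expect to cite a standard reference such as the one already invoked in the statement.
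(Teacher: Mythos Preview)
Your argument is correct: the logarithmic derivative of $\sin(X)/X$ is $\cot X - 1/X$, and the classical Bernoulli expansion $X\cot X = \sum_{k\geq 0}\frac{(-4)^k B_{2k}}{(2k)!}X^{2k}$ (which you derive cleanly from $\frac{t}{e^t-1}$ with $t=2iX$) gives exactly the stated series after dividing by $X$, integrating, and exponentiating.

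As for comparison with the paper: there is nothing to compare. The paper does not prove this lemma at all; it simply quotes the identity from Gradshteyn--Ryzhik (equation~(1.518.1)). Your write-up therefore supplies more than the paper does, and your closing remark---that the only nontrivial input is the standard cotangent expansion, for which one would cite a table---is precisely how the paper treats the entire lemma.
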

\subsection{Proof of Theorem~\ref{Theorem2}}
	The generating function for $M(m,n)$ (see \cite{AG, Garvan}) is given by 	
\begin{equation}\label{AGGenFunction}
\sum_{n\geq0}\sum_{m\in\mathbb{Z}} M(m,n) z^mq^n
	=\prod_{n\geq1}\frac{1-q^n}{(1-zq^n)(1-z^{-1}q^n)}.
\end{equation}
	After letting $z=e^{2iX},$ we pair up conjugate terms to obtain
	$$\sum_{n\geq0}\sum_{m\in\mathbb{Z}} M(m,n)q^n e^{2imX}
	=\prod_{n\geq1}\frac{(1-q^n)}{(1-2\cos(2X)q^n+q^{2n})}.$$
Thanks to the identity $-2\cos(2X)=-2+4\sin^2X,$  after taking real parts of $z$ we obtain
	\begin{align}\label{CM}
		\sum_{n\geq0}\sum_{m\in\mathbb{Z}} M(m,n)q^n \cos(2mX)
= \prod_{j\geq1}\frac{1}{(1-q^{j})\left[1+\frac{4(\sin^2X)q^j}{(1-q^j)^2}\right]}.
	\end{align}
	Combining Lemma \ref{identity1} and \eqref{CM}, we obtain
	\begin{align*}
	\sum_{n\geq0}\sum_{m\in\mathbb{Z}} M(m,n)q^n \cos(2mX) =\frac{1}{(q;q)_{\infty}}\cdot\exp\left(2\sum_{k\geq1}\frac{\mathbf{S}_{2k-1}(q)}{(2k)!} (-4X^2)^k\right).
\end{align*}
Thanks to \eqref{Sj} and Lemma \ref{sinc}, we obtain
\begin{align*}
\sum_{n\geq0}\sum_{m\in\mathbb{Z}} M(m,n)q^n \cos(2mX)&=\frac{1}{(q;q)_{\infty}}\cdot\exp\left(\sum_{k\geq1}\frac{B_{2k}}{(2k)(2k)!} (-4X^2)^k\right)\exp\left(\sum_{k\geq1}\frac{G_{2k}(\tau)}{(2k)!} (-4X^2)^k\right)\\
&=\frac{1}{(q;q)_{\infty}}\cdot\frac{\sin(X)}{X}\cdot\exp\left(\sum_{k\geq1}\frac{G_{2k}(\tau)}{(2k)!} (-4X^2)^k\right).
\end{align*}
	We recognize this last expression in the context of P\'olya's cycle index polynomials. Namely,
	Lemma~\ref{PolyaGenFunction} gives the identity
	$$\exp\left( \sum_{j\geq1}x_j\cdot \frac{y^j}j   \right)
	=\sum_{k\geq0}\left(\sum_{\lambda\vdash k}  
	\prod_{j=1}^k \frac1{m_j!} \left(\frac{x_j}{j}\right)^{m_j} 
	\right)y^k,
	$$
	which we apply with $x_j=\frac{G_{2j}(\tau)}{2(2j-1)!}$ and $y=-4X^2.$ Therefore, we find that
	\begin{align*}
		\sum_{n\geq0}\sum_{m\in\mathbb{Z}} M(m,n)q^n \cos(2mX) =\frac{1}{(q;q)_{\infty}}\cdot\frac{\sin(X)}{X}\cdot
		\sum_{k\geq0} \left(\sum_{\lambda\vdash k} \prod_{j=1}^k \frac{1}{m_j!}
		\left(\frac{G_{2j}(\tau)}{(2j)!}\right)^{m_j}\right)(-4X^2)^k.
	\end{align*}
	Using the Taylor expansion of $\cos(2mX)$, a simple change of variables (i.e. $2X\rightarrow X$) gives	
	\begin{equation}\label{CrankGenFunction}
		\sum_{k\geq0}(-1)^kC_{2k}(q)\cdot\frac{X^{2k}}{(2k)!}
		=\frac{2\sin(\frac{X}{2})}{(q;q)_{\infty}}\cdot
		\sum_{k\geq0} (-1)^k\Tr_k(\phi_{\color{black}c};\tau)\cdot X^{2k-1}.
	\end{equation}
	
	\subsection{Proof of Corollary~\ref{Cor1}}
	By inserting
		 the Taylor expansion of $\sin(X/2)$ in (\ref{CrankGenFunction}), a straightforward simplification gives
	\begin{align*}
		\sum_{k\geq0}(-1)^kC_{2k}(q)\cdot\frac{X^{2k}}{(2k)!}
		=\frac{1}{(q;q)_{\infty}}\cdot\sum_{k=0}^\infty (-1)^k\left(
		\sum_{n=0}^{k} \frac{\Tr_{k-n}(\phi_{\color{black}c};\tau)}{4^{n}\cdot(2n+1)!}\right)\cdot X^{2k}.
	\end{align*} 
	By comparing the coefficient of $X^{2k}$ on both sides and using the Pochhammer symbol to take into account
$(2k)!$, we obtain the claimed formulas for $C_{2k}(q).$

\section{Proof of Theorem~\ref{Theorem3}}

In this section, we prove Theorem~\ref{Theorem3} after recalling some essential preliminaries about Jacobi forms.

\subsection{Background on Jacobi forms}\label{SecJacobi}
The definition of a meromorphic Jacobi form is given in the introduction. Here we give further basic properties of Jacobi forms to facilitate the proof of Theorem~\ref{Theorem3}. Namely, we require a universal factorization theorem for meromorphic Jacobi forms with torsional divisor.

\subsubsection{A key Jacobi form factorization}
To this end, we begin by recalling the first example of a Jacobi form, the celebrated Jacobi theta function (see \cite{EichlerZagier, Zagier})
\[
\theta(z;\tau):=\sum_{n\in \Z}u^nq^{n^2/2},
\]
which is a Jacobi form for $\SL_2(\Z)$ of weight $1/2$ and index $1/2$.
We work instead with a slightly modified version of this function. Namely,  let $\eta(\tau):=q^{\frac1{24}}\prod_{n=1}^{\infty}(1-q^n),$ and then consider 
\begin{align}\label{ThetaDef}
    \Theta(z,\tau) \ &= \  (u^{1/2}-u^{-1/2})\prod_{n\geq 1}\frac{(1-uq^n)(1-u^{-1}q^n)}{(1-q^n)^2}
= \ \frac{1}{\eta(\tau)^3} \ \sum_{n\in \mathbb{Z}} (-1)^n u^{\frac{2n+1}{2}}q^{\frac{(2n+1)^2}{8}},
\end{align}
where we have employed the Jacobi Triple Product formula (see Theorem 2.8 of \cite{Andrews}). We note that this function arises naturally as 
a factor of the generating function for the Andrews-Garvan crank (\ref{AGGenFunction}), and
it is a Jacobi form for $\SL_2(\Z)$ with weight $-1$ and index $1/2$. 
The main reason for working with $\Theta(z;\tau)$ is its close connection to the Weierstrass $\sigma$-function.  To be precise, it satisfies the identity (see Theorem I.6.4 of \cite{Silverman}) 
\begin{equation}\label{ThetaSigma}
\Theta(z;\tau)= (2\pi i)\exp\left(-\frac{G_2(\tau)}{2}(2\pi iz)^2\right)\sigma(z;\tau).
\end{equation}

As noted earlier, the divisor of a non-zero Jacobi form $F(z;\tau)$ is the formal finite sum \[\Div(F)=\displaystyle\sum_{x\in \C/\Lambda_{\tau}}a_x \cdot (x),\] where each $a_x$ is the the order of $F$ at the point $x$.  The {\it degree} of a divisor is the  sum 
\[\deg(D) = \sum_{x\in \mathbb{C}/\Lambda_\tau} a_x.\]
Using this terminology, we have the following factorization result for meromorphic Jacobi forms. This proposition is the key point that identifies the weakly holomorphic modular form $f_F$ in Theorem~\ref{Theorem3}.

\begin{proposition}\label{JacobiDecomp}
Let $F(z;\tau)$ be a non-zero meromorphic Jacobi form of weight $k$ and index $m$ for some subgroup $\Gamma\subseteq \SL_2(\mathbb{Z})$. If $F$ has divisor $D=\Div(F)=\sum_{i=0}^N a_{x_i}\cdot (x_i)$, with $x_0=0$,
then the following are true.

\noindent
(1) The action of $\Gamma$ fixes the divisor $D$. Moreover, we have that $\deg(D)=2m$, and as a sum of complex numbers, we have
\begin{equation}\label{condition}
\sum_{i=0}^N a_{x_i}\cdot  x_i \equiv 0  \pmod{\Lambda_\tau}.
\end{equation}

\noindent
(2)   If representatives $x_i$ of the divisor are chosen so that the sum in (\ref{condition}) vanishes, then the Jacobi form $F$ factors as the product
\[
F(z;\tau)=f_F(\tau)\cdot \Theta(z;\tau)^{a_{x_0}}\prod_{i=1}^{N} \frac{\Theta(z-x_i;\tau)^{a_{x_i}}}{\Theta(-x_i;\tau)^{a_{x_i}}},
\]
where $f_F(\tau)$ is a meromorphic modular form on $\Gamma$ with weight $k+a_{x_0}$.
\end{proposition}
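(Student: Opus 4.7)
My plan is to prove part (1) with a residue-theorem argument and part (2) with a divisor-matching argument followed by a leading-order expansion at $z=0$ to pin down the weight. For part (1), the $\Gamma$-invariance of $\Div(F)$ is immediate from the modular transformation in the definition, since the automorphy factor $(c\tau+d)^k\exp(2\pi i m c z^2/(c\tau+d))$ is nowhere zero on $\C\times\HH$. For the remaining claims, I would fix a fundamental parallelogram $P$ for $\Lambda_\tau$, translated if needed so that $F$ has neither zeros nor poles on $\partial P$, and invoke the argument principle. The integral $\frac{1}{2\pi i}\oint_{\partial P} F'(z;\tau)/F(z;\tau)\,dz$ equals $\deg(D)$; pairing the two sets of parallel edges and using the elliptic transformation with $(a,b)=(0,1)$ and $(a,b)=(1,0)$, the boundary integral evaluates to $2m$. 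Applying the same reasoning to $\frac{1}{2\pi i}\oint_{\partial P} z\cdot F'(z;\tau)/F(z;\tau)\,dz=\sum_i a_{x_i}x_i$ yields a boundary expression lying in $\Lambda_\tau$, establishing (\ref{condition}).

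For part (2), let $G(z;\tau)$ denote the right-hand side of the claimed factorization with $f_F$ omitted, and set $f_F(\tau):=F(z;\tau)/G(z;\tau)$. Since $\Theta(\cdot;\tau)$ has simple zeros exactly along $\Lambda_\tau$, the function $G$ has divisor $D$, so $f_F$ is holomorphic and nowhere zero in $z$. To conclude that $f_F$ depends only on $\tau$, it suffices to show that $G$ obeys the same elliptic transformation as $F$. For $z\mapsto z+a\tau+b$, each factor of $G$ indexed by $x_i$ picks up a $\Theta$-multiplier of the form $(-1)^{a_{x_i}(a+b)}\exp\bigl(-\pi i a_{x_i}(a^2\tau+2a(z-x_i))\bigr)$; multiplying over $i$, the sign collapses to $(-1)^{(a+b)\deg(D)}=1$ since $\deg(D)=2m\in\Z$, the exponential assembles (using $\sum_i a_{x_i}=2m$) into $\exp(-2\pi i m(a^2\tau+2az))$, and the stray factor $\exp(2\pi i a\sum_i a_{x_i}x_i)$ equals $1$ by the choice of representatives. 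This matches the elliptic automorphy of $F$, so $F/G$ is elliptic and non-vanishing in $z$, hence a function of $\tau$ alone.

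To identify the weight, I would use $\Theta(z;\tau)=2\pi i z+O(z^3)$ near $z=0$ together with the normalization of the factors $\Theta(z-x_i;\tau)^{a_{x_i}}/\Theta(-x_i;\tau)^{a_{x_i}}$ to value $1$ there; this gives $F(z;\tau)=f_F(\tau)(2\pi i z)^{a_{x_0}}\bigl(1+O(z)\bigr)$ as $z\to 0$. Substituting into the weight-$k$ Jacobi automorphy of $F$ and reading off the leading $z$-coefficient yields $f_F(\gamma\tau)=(c\tau+d)^{k+a_{x_0}}f_F(\tau)$ for all $\gamma\in\Gamma$; meromorphy in $\tau$, including at the cusps, follows from the corresponding properties of $F$, $\Theta$, and $\eta$.

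The main obstacle I anticipate is the bookkeeping for the elliptic-transformation check of $G$: one must simultaneously track the sign $(-1)^{\deg(D)}$, the $a^2\tau$ and $az$ exponential pieces that jointly reconstruct the Jacobi factor $\exp(-2\pi i m(a^2\tau+2az))$, and the residual term in which the hypothesis $\sum_i a_{x_i}x_i=0$ in $\C$ (not merely modulo $\Lambda_\tau$) is used to cancel the $x_i$-dependent contribution. Once this cancellation is verified, the rest of the argument is essentially formal.
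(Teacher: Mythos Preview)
Your proposal is correct and follows essentially the same approach as the paper. The only differences are presentational: for part (1) the paper divides $F$ by $\Theta^{2m}$ to reduce to the standard degree and Abel--Jacobi facts for elliptic functions (citing Silverman), whereas you reprove those facts directly via the boundary integrals of $F'/F$ and $zF'/F$; for part (2) the paper deduces the weight of $f_F$ from the modular transformation law of $\Theta$, whereas you read it off from the leading $z$-coefficient at $z=0$---both arguments are equivalent in content.
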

\begin{remark}
We stress that $a_{x_0}$ can of course be 0 in $\Div(F)$ in Proposition~\ref{JacobiDecomp}. We set $x_0=0$ as this point requires separate treatment in the proof below.
\end{remark}

\begin{proof}
The points of $\C/\Lambda_\tau$ are fixed under the action of $\Gamma$ unless, like torsion points,  they depend on $\tau$. The  modular transformation of $F$ implies that $\ord_x(F(\tau))=\ord_x\big(F(\frac{z}{c\tau+d},\frac{a\tau+b}{c\tau+d})\big)$ for all $x\in \C/\Lambda_\tau$ and $\left(\begin{smallmatrix}a&b\\c&d\end{smallmatrix}\right)\in \Gamma$, and so the action of $\Gamma$ must fix $D$.

Now consider the Jacobi form 
\[
F_1(z;\tau):=F(z;\tau)\cdot \Theta(z;\tau)^{-2m},
\]
which is a Jacobi form of index $0.$ In other words, it is an elliptic function.  
From \cite[Th. 2.2] {SilvermanI}, we have that the degree of the divisor of $F_1$ is $0,$ and also that 
\[\sum_{x\in \C/\Lambda_\tau} \ord_x(F_1) \cdot x \equiv 0\pmod{\Lambda_\tau},\]
as a sum of genuine complex numbers (as opposed to a formal sum of points).
Since the divisor of $\Theta(z;\tau)$ is just $(0)$, the proof of (1) is complete. 

 We now turn to the proof of (2).
To this end, we consider the function
\[
f_F(z;\tau)=F(z;\tau)\cdot \Theta(z;\tau)^{-a_{x_0}}\prod_{i>0} \frac{\Theta(-x_i;\tau)^{a_{x_i}}}{\Theta(z-x_i;\tau)^{a_{x_i}}}.
\]
The divisor of this function is trivial, and applying the elliptic transformation laws of  $\Theta(z;\tau)$, we see that it is bounded as $z$ varies. Thus $f_F$ is constant in $z$. A short calculation using the transformation laws of $\Theta(z;\tau)$ shows that the exponential terms from the Jacobi transformation laws cancel and so $f_F(\tau)$ is modular on $\Gamma$ with weight $k+a_{x_0}.$ 
\end{proof}

\subsubsection{Eisenstein series for torsional divisors}
Theorem~\ref{Theorem3} requires Eisenstein series that correspond to a torsional divisor $D$. These series arise naturally from $\Theta(z;\tau),$
and thanks to Proposition~\ref{JacobiDecomp} 
{\color{black} we are led} to Theorem~\ref{Theorem3}. 
For convenience, we introduce the minor modification
\begin{equation}\label{ThetaTilde}
\widetilde \Theta(z;\tau):= \exp\left(\tfrac{\pi}2\tfrac{z^2-|z|^2}{ \Im(\tau)} \right)\Theta(z;\tau).
\end{equation}
The Jacobi form transformation laws of $\Theta(z;\tau)$ (i.e. weight $-1$ and index $1/2$), imply that
\[\tau\widetilde\Theta\left(\frac{z}{\tau};\frac{-1}{\tau}\right)=\widetilde\Theta(z;\tau)=\widetilde\Theta(z;\tau+1)\qquad \text{and} \qquad \left|\widetilde\Theta(z+1;\tau)\right|=\left|\widetilde\Theta(z+\tau;\tau)\right|=\left|\widetilde\Theta(z;\tau)\right|.
\]
Therefore, the Taylor coefficients of $\log |\Theta(z;\tau)|$ and $\log |\widetilde \Theta(z;\tau)|$ are well-defined. 
The Eisenstein series that will form the building blocks of (\ref{highereisenstein}) arise in this way.

To be precise, if $x$ is a non-zero torsion point, we define the weight $k$ Eisenstein series $\widetilde G_{k,x}(\tau)$ using the Taylor coefficients of $\widetilde \Theta(z;\tau)$ around $z=x:$
\begin{equation}\label{TildeGkx}
\widetilde G_{k,x}(\tau) \ := \ \left.-\left(\frac{1}{2\pi i}\frac{d}{dz}\right)^k \log\left|\widetilde \Theta(z;\tau)\right|^2 \right|_{z=x}.
\end{equation}
Similarly, we define the holomorphic Eisenstein series
\begin{equation}\label{Gkx}
G_{k,x}(\tau) \ := \ \left.-\left(\frac{1}{2\pi i}\frac{d}{dz}\right)^k \log\left| \Theta(z;\tau)\right|^2 \right|_{z=x}.
\end{equation}

The following lemma lists the key properties that we require of these Eisenstein series.

\begin{lemma}\label{TorsionalEisenstein} If $x=\alpha \tau+\beta,$  where $\alpha,\beta \in \Q,$ then the following are true.

\noindent
(1) If  $x'$ is a torsion point with $x-x'\in \Lambda_{\tau},$ then 
$$\widetilde G_{k,x}(\tau)=\widetilde G_{k,x'}(\tau).
$$ 
Moreover, if $\gamma=\left(\begin{smallmatrix}a&b\\c&d\end{smallmatrix}\right)\in \SL_2(\Z)$, then 
\[\widetilde G_{k,x}(\gamma\tau) = (c\tau+d)^k \widetilde G_{k,x'}(\tau),
\] where $x' = \alpha'\tau+\beta'$ with $(\alpha',\beta')=(\alpha,\beta)\gamma.$

\noindent
(2) The series $\widetilde G_{k,x}(\tau)$ and $G_{k,x}(\tau)$ are related by
\[ 
\widetilde G_{k,x}(\tau)=\begin{cases} 
G_{k,x}(\tau) +\alpha &\ \ \ \text{ if } k=1,\\
G_{k,x}(\tau) +\frac{1}{4\pi \Im(\tau)}&\ \ \ \text{ if } k=2,\\
G_{k,x}(\tau) &\ \ \ \text{ if } k>2.
\end{cases}
\] 
 
 \noindent 
 (3)
 If $\alpha=0$, then $G_{k,x}(\tau)$ has the Fourier expansion
\[
G_{k,x}(\tau)=
\begin{cases} 
\tfrac{1}{2}+\frac{1}{2\pi i}\left(\zeta(1,-\beta) -\zeta(1,\beta)\right)+\displaystyle\sum_{n> 0} \sum_{m>0} \left(\zeta_\beta^n-\zeta_{\beta}^{-n}\right) q^{mn}
&\text{ if } k=1,\\
\frac{(k-1)!}{(2\pi i)^k}\left(\zeta(k,-\beta) +(-1)^k\zeta(k,\beta)\right) +\displaystyle\sum_{n> 0} \sum_{m>0} n^{k-1}\left(\zeta_\beta^n+(-1)^k\zeta_{\beta}^{-n}\right) q^{mn}
&\text{ if } k\geq 2,
\end{cases}
\]
where $\zeta_\beta:=e^{\frac{2\pi i}{\beta}},$  and $\zeta(s,\beta)$ is the Hurwitz zeta function.

\noindent
(4) If $0<\alpha<1$,  then $G_{k,x}(\tau)$ has the Fourier expansion
\[
G_{k,x}(\tau)=
\begin{cases} 
\ \frac{1}{2}+\displaystyle\sum_{n> 0} \sum_{m\geq 0} \zeta_{\beta}^n q^{(m+a)n}- \sum_{n>0} \sum_{m>0} \zeta_{\beta}^{-n} q^{(m-a)n}
&\text{ if } k=1,\\
\ \displaystyle\sum_{n> 0} \sum_{m\geq0} n^{k-1}\zeta_{\beta}^n q^{(m+a)n}- \sum_{n>0} \sum_{m>0} (-n)^{k-1}\zeta_{\beta}^{-n} q^{(m-a)n}
 &\text{ if } k\geq2.
\end{cases}
\]
 
\end{lemma}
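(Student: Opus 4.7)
My plan is to prove the four parts in order, leveraging the explicit product formula for $\Theta(z;\tau)$ and the stated invariance/transformation properties of $\widetilde\Theta(z;\tau)$.

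For part (1), translation invariance is automatic from $|\widetilde\Theta(z+\omega;\tau)| = |\widetilde\Theta(z;\tau)|$ for $\omega \in \Lambda_\tau$, since the $k$-th derivative of $\log|\widetilde\Theta|^2$ at $z = x$ depends only on the local germ. The modular transformation follows from $\tau\,\widetilde\Theta(z/\tau;-1/\tau) = \widetilde\Theta(z;\tau)$ together with the chain rule $\frac{d}{dz'} = (c\tau+d)^{-1}\frac{d}{dz}$ where $z' = z/(c\tau+d)$; applied $k$ times this produces the weight factor $(c\tau+d)^k$. The shifted base point $x'$ appears because the torsion coordinates $(\alpha,\beta)$ transform under the right action of $\gamma$, which reduces to a short $\SL_2$-matrix computation expressing $x/(c\tau+d)$ as $\alpha'\,\gamma\tau + \beta' \pmod{\Lambda_{\gamma\tau}}$ with $(\alpha',\beta') = (\alpha,\beta)\gamma$.

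For part (2), multiplying the exponential prefactor of $\widetilde\Theta$ by its conjugate yields
\[
|\widetilde\Theta(z;\tau)|^2 = \exp\bigl(-2\pi\,\Im(z)^2/\Im(\tau)\bigr)\,|\Theta(z;\tau)|^2,
\]
so the difference of logarithms equals $\pi(z-\bar z)^2/(2\Im(\tau))$. As a function of $z$ with $\bar z$ treated as Wirtinger-independent, this is a polynomial of degree $2$, hence $\bigl(\frac{1}{2\pi i}\frac{d}{dz}\bigr)^k$ of the correction vanishes for $k \geq 3$. For $k = 1, 2$, direct evaluation at $z = x = \alpha\tau+\beta$ (using $z - \bar z = 2i\alpha\Im(\tau)$) yields the three listed cases after respecting the overall sign in the definition of $\widetilde G_{k,x}$.

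For parts (3) and (4), I would start from the logarithm of the product formula
\[
\log\Theta(z;\tau) = \log(u^{1/2}-u^{-1/2}) + \sum_{n\geq 1}\bigl[\log(1-uq^n)+\log(1-u^{-1}q^n)\bigr] + C(\tau).
\]
Since Wirtinger $\frac{d}{dz}$ annihilates the antiholomorphic factor of $\log|\Theta|^2$, it suffices to differentiate $\log\Theta$. Expanding each Lambert series $\log(1-uq^n) = -\sum_{m\geq 1} u^m q^{mn}/m$ and applying $\bigl(\frac{1}{2\pi i}\frac{d}{dz}\bigr)^k$ yields $-\sum_{m,n\geq 1} m^{k-1} u^m q^{mn}$, with the $u^{-1}$ factor similarly producing $-\sum_{m,n\geq 1}(-m)^{k-1} u^{-m} q^{mn}$. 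Substituting $u = q^\alpha \zeta_\beta$ and relabeling indices produces exactly the two double sums in (3) and (4). Decomposing $\log(u^{1/2}-u^{-1/2}) = -\tfrac{1}{2}\log u + \log(1-u) + \mathrm{const}$, the $-\tfrac{1}{2}\log u$ piece contributes the $\tfrac{1}{2}$ constant appearing in the $k=1$ formulas. The residual $\log(1-u)$ is where the two cases diverge: in (4), $|u| = q^\alpha < 1$, so its Lambert expansion converges absolutely and its derivatives supply the ``$n = 0$'' contributions that extend the inner sum bound to $m \geq 0$; in (3), $|u| = 1$, and one instead writes $\log(u^{1/2}-u^{-1/2}) = \log(2i\sin(\pi z))$ and uses the Mittag--Leffler expansion $\pi\cot(\pi z) = \sum_{n\in\Z}(z+n)^{-1}$ (Eisenstein-summed). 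Differentiating $k-1$ further times produces $(-1)^{k-1}(k-1)!\sum_{n\in\Z}(\beta+n)^{-k}$ (absolutely convergent for $k\geq 2$), which regroups as the stated Hurwitz-zeta expression once $\zeta(k,-\beta)$ is identified with $\zeta(k,1-\beta)$ after reduction modulo $1$.

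The main obstacle will be the $k = 1$ subcase of part (3), where the cotangent sum is only conditionally convergent and $\zeta(1,\cdot)$ is genuinely divergent; the stated formula must then be read as the regularized limit $\lim_{s\to 1^+}\bigl[\zeta(s,-\beta) - \zeta(s,\beta)\bigr]$, which the digamma reflection formula identifies with $-\pi\cot(\pi\beta)$. A secondary bookkeeping task is the $\SL_2$-matrix verification establishing $(\alpha',\beta') = (\alpha,\beta)\gamma$ in the modular transformation of (1), together with careful sign tracking through Wirtinger derivatives throughout.
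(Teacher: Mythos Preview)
Your proposal is correct and follows essentially the same route as the paper's proof: part (1) via the elliptic and modular transformation laws of $\widetilde\Theta$ together with the identity $\alpha\frac{a\tau+b}{c\tau+d}+\beta=\frac{\alpha'\tau+\beta'}{c\tau+d}$, part (2) by differentiating the exponential prefactor in the definition of $\widetilde\Theta$, and parts (3)--(4) by taking the logarithm of the product formula for $\Theta$, differentiating termwise, and substituting $u=\zeta_\beta q^\alpha$. The only difference is one of detail: where the paper simply records the Hurwitz-zeta identity $\left.-\bigl(\tfrac{1}{2\pi i}\tfrac{d}{dz}\bigr)^{k-1}\tfrac{u}{1-u}\right|_{z=\beta}=\tfrac{(k-1)!}{(2\pi i)^k}\bigl(\zeta(k,-\beta)+(-1)^k\zeta(k,\beta)\bigr)$, you propose to derive it via the Mittag--Leffler expansion of $\pi\cot(\pi z)$ and, in the $k=1$ case, the digamma reflection formula---a slightly more explicit justification of the same constant term.
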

\begin{TwoRemarks} ~

\noindent
(1) We note that Lemma~\ref{TorsionalEisenstein} (1) implies that $\widetilde{G}_{k,x}(\tau)$ is modular with level $\lcm(\denom(\alpha),\denom(\beta)).$ The same levels hold in all cases for $G_{k,x}(\tau).$

\noindent
(2) We emphasize the fact that $x-x'\in \Lambda_\tau$ does not imply that $G_{1,x}(\tau)$ equals $G_{1,x'}(\tau)$. In particular,  Lemma~\ref{TorsionalEisenstein} (1) and (2) show that $G_{1,x}(\tau)\neq G_{1,x+c\tau}(\tau)$ if $c\neq 0$. 
\end{TwoRemarks}

\begin{proof}
The transformation law in (1) follows from the transformation law for $\widetilde \Theta(z;\tau)$, and the fact that $$\alpha \frac{a\tau+b}{c\tau+d} +\beta= \frac{\alpha'\tau+\beta'}{c\tau+d}.
$$
To prove (2), we simply note that the additional terms for $\widetilde G_{k,x}(\tau)$ when $k=1$ and $k=2$ are simply the first and second logarithmic derivatives respectively of the exponential factor $\exp\left(\tfrac{\pi}2\tfrac{z^2-|z|^2}{ \Im (\tau)} \right)$ in the definition of $\widetilde \Theta(z;\tau)$ in (\ref{ThetaTilde}). 
 
 We take the logarithm of (\ref{ThetaDef}) to obtain
 \[
 \log|\Theta(z;\tau)|^2=\log|u^{-1/2}|^2+\log|1-u|^2+\sum_{n\geq 1} \left(\log|1-uq^n|^2+\log|1-u^{-1}q^n|^2 {\color{black} - \log\vert 1-q^n\vert^4 }\right).
 \] 
 Then applying the iterated derivative, and the substitution $u=\zeta_{\beta}q^{\alpha}$ leads to the Fourier expansions in (3) and (4).  The Hurwitz $\zeta$-function values in (3) are
 \[
 \left.-\left(\frac{1}{2\pi i}\frac{d}{dz}\right)^{k-1} \frac{u}{1-u}\right|_{z=\beta}= \frac{(k-1)!}{(2\pi i)^k}\left(\zeta(k,-\beta) +(-1)^k\zeta(k,\beta)\right).
 \]
 This completes the proof.
\end{proof}

For Theorem~\ref{Theorem3}, we must extend the Eisenstein series $G_{k,x}(\tau)$ to the weight $k$ quasimodular Eisenstein series $G_{k,D}(\tau)$ that we require.
To this end, we use the convention $G_{2k,0}(\tau)=G_{2k}(\tau)$ and $G_{2k+1,0}(\tau)=0$. For nontrivial torsional divisors  $D=\sum_{x\in \C/\Lambda_\tau} a_x(x)$,  we assume that representatives $x_i$ are chosen so that $|\sum_{x\in \C/\Lambda_\tau} a_x x|$ is  minimized. Then we let
\begin{equation}\label{EisensteinD}
G_{k,D}(\tau):=\sum_{x\in \C/\Lambda_\tau} a_x\, G_{k,x}(\tau).
\end{equation}

\begin{lemma}\label{EisensteinDStuff} If $D$ is a nontrivial torsional divisor for a Jacobi form $F$, then the following are true.

\noindent
(1) If $k=1$, then
\[
G_{1,D}(\tau)=\sum_{x\in \C/\Lambda_\tau} a_x\, \widetilde G_{1,x}(\tau)
\]
is a weight 1 holomorphic modular form.

\noindent
(2)  For $k=2,$ we have that $F$ has index $m=0$ if and only if
\[
G_{2,D}(\tau)=\sum_{x\in \C/\Lambda_\tau} a_x \,\widetilde G_{2,x}(\tau)
\]
is a weight 2 holomorphic modular form. Otherwise, $G_{2,D}(\tau)$ is quasimodular.

\noindent
(3) If $k\geq 3,$ then $G_{k,D}(\tau)$ is a weight $k$ holomorphic modular form.

\end{lemma}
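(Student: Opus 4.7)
My plan is to deduce all three parts from Lemma~\ref{TorsionalEisenstein} together with Proposition~\ref{JacobiDecomp}. The backbone is that the divisor $D$ is fixed by $\Gamma$ (Proposition~\ref{JacobiDecomp}(1)), and the action of $\gamma=\left(\begin{smallmatrix}a&b\\c&d\end{smallmatrix}\right)\in\Gamma$ on the index $x=\alpha\tau+\beta$ of $\widetilde G_{k,x}$ via $(\alpha,\beta)\mapsto(\alpha,\beta)\gamma$ matches the action on the points of $D$. Combined with the weight-$k$ transformation law in Lemma~\ref{TorsionalEisenstein}(1), this shows that $\sum_{i}a_{x_i}\widetilde G_{k,x_i}(\tau)$ is a (possibly non-holomorphic) weight-$k$ modular form on $\Gamma$ for every $k\geq 1$. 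The three statements then reduce to comparing $G_{k,D}(\tau)$ with $\sum_{i}a_{x_i}\widetilde G_{k,x_i}(\tau)$ using Lemma~\ref{TorsionalEisenstein}(2), while holomorphicity at the cusps in each case follows from the explicit Fourier expansions in parts (3)--(4) of Lemma~\ref{TorsionalEisenstein}.

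For part (3), $k\geq 3$ gives $\widetilde G_{k,x}=G_{k,x}$ directly from Lemma~\ref{TorsionalEisenstein}(2), so $G_{k,D}=\sum_i a_{x_i}\widetilde G_{k,x_i}$ is a weight-$k$ holomorphic modular form with nothing further to do. For part (1), writing $x_i=\alpha_i\tau+\beta_i$, the correction is $\widetilde G_{1,x_i}-G_{1,x_i}=\alpha_i$, giving
\[
\sum_i a_{x_i}\widetilde G_{1,x_i}(\tau)-G_{1,D}(\tau)=\sum_i a_{x_i}\alpha_i.
\]
The arithmetic key is the representative convention preceding the lemma: by (\ref{condition}) we have $\sum_i a_{x_i}x_i\in\Lambda_\tau$, so minimizing $|\sum_i a_{x_i}x_i|$ forces this sum to vanish, and splitting into real and imaginary parts (using $\alpha_i,\beta_i\in\Q$ and $\Im(\tau)>0$) forces $\sum_i a_{x_i}\alpha_i=0$. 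Hence $G_{1,D}=\sum_i a_{x_i}\widetilde G_{1,x_i}$, and part (1) follows from the backbone.

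Part (2) is the most delicate. Since $\widetilde G_{2,x}-G_{2,x}=\tfrac{1}{4\pi\Im(\tau)}$ is independent of $x$, summing and invoking $\deg(D)=2m$ from Proposition~\ref{JacobiDecomp}(1) yields
\[
\sum_i a_{x_i}\widetilde G_{2,x_i}(\tau)-G_{2,D}(\tau)=\frac{\deg(D)}{4\pi\Im(\tau)}=\frac{m}{2\pi\Im(\tau)}.
\]
When $m=0$ the two sides coincide, so $G_{2,D}$ is directly a weight-$2$ holomorphic modular form. When $m\neq 0$, I would use $G_2^\star(\tau)=G_2(\tau)+\tfrac{1}{4\pi\Im(\tau)}$ to rearrange into
\[
G_{2,D}(\tau)-2mG_2(\tau)=\sum_i a_{x_i}\widetilde G_{2,x_i}(\tau)-2mG_2^\star(\tau),
\]
so that the non-holomorphic corrections $\tfrac{2m}{4\pi\Im(\tau)}$ cancel on the right. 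The right-hand side is then a difference of two weight-$2$ non-holomorphic modular forms on $\Gamma$ whose combined non-holomorphic parts vanish, hence a holomorphic weight-$2$ modular form. This exhibits $G_{2,D}$ as the sum of a holomorphic weight-$2$ modular form and $2mG_2$, which is precisely quasimodular in the sense of $\C[G_2,G_4,G_6]$. The main obstacle is exactly this last step: matching the $\Im(\tau)^{-1}$ correction canonically against $G_2^\star$ in order to upgrade ``modular modulo a non-holomorphic term'' to genuine quasimodularity, and making sure the bookkeeping of the $\deg(D)=2m$ factor lines up with the coefficient of $G_2$.
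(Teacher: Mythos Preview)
Your proposal is correct and follows essentially the same approach as the paper: compute the difference $G_{k,D}(\tau)-\sum_x a_x\widetilde G_{k,x}(\tau)$ via Lemma~\ref{TorsionalEisenstein}(2), observe that $\sum_x a_x\widetilde G_{k,x}$ is modular of weight $k$ because $\Gamma$ fixes $D$, and then use the minimizing convention (for $k=1$) and $\deg(D)=2m$ (for $k=2$) to evaluate that difference. You supply more detail than the paper in two places---the explicit $\Gamma$-invariance argument for the $\widetilde G$-sum and the identification of the $\tfrac{m}{2\pi\Im(\tau)}$ correction with $2m(G_2^\star-G_2)$ to exhibit quasimodularity---but the underlying argument is the same.
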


\begin{proof}
In each case, we use the fact that $\widetilde G_{k,x}(\tau)$ is a weight $k$ (possibly non-holomorphic) modular form, and we then apply Lemma~\ref{TorsionalEisenstein} (2) to account for the difference
\[
G_{k,D}(\tau)-\sum_{x\in \C/\Lambda_\tau} a_x \widetilde G_{k,x}(\tau).
\]

In (1), the difference vanishes due to the requirement that 
$\sum_{x\in \C/\Lambda_\tau} a_x \cdot x \in \Lambda_\tau,$ and is therefore $0$ since we chose representatives $x$ to minimize this sum. For (2), we use the fact that
$\sum_{x\in \C/\Lambda_\tau} a_x =2m$, to see that the difference is $\frac{m}{2\pi \Im(\tau)}.$
For (3), this difference is identically $0$.
\end{proof}

\subsection{Proof of Theorem~\ref{Theorem3}}

Suppose that $F(z;\tau)$ is as in the statement of the Theorem \ref{Theorem3}. Moreover, suppose that $\Div(F)=\sum_{i=1}^Na_{x_i}(x_i),$ and again that $x_0=0$. 
Using the definition of  $G_{k,x}(\tau)$ above (i.e. (\ref{TildeGkx}) and (\ref{Gkx})), for $z$ near $0$ we find that
\begin{eqnarray*}
\Theta(z-x;\tau)&=&2\pi i(z-x)\exp\left( -\sum_{k\geq 1} \frac{G_{2k}(\tau)}{(2k)!}(2\pi i)^{2k}(z-x)^{2k}\right)\\
&=&-2\pi i\,x\cdot \exp\left( \log\left(1-\frac{z}{x}\right)-\sum_{k\geq 1} G_{2k}(\tau)(2\pi i)^{2k}\sum_{0\leq j\leq 2k} \frac{z^{j}}{j!}\frac{(-x)^{2k-j}}{(2k-j)!}\right)\\
&=&\Theta(-x;\tau)\cdot \exp\left( 
\sum_{j\geq 1}(2\pi i)^j \, \frac{z^{j}}{j!}\left(-\frac{(j-1)!}{(2\pi i \, x)^{j}} 
-\sum_{k\geq 1} G_{2k}(\tau) \frac{(-2\pi i x)^{2k-j}}{(2k-j)!}\right)\right)\\
&=&\Theta(-x;\tau)\cdot \exp\left( 
-\sum_{j\geq 1}\frac{(2\pi i  z)^{j}}{j!}G_{j,x}(\tau)\right).
\end{eqnarray*}
This implies then that
\begin{equation}\label{piece}
\frac{\Theta(z-x;\tau)}{\Theta(-x;\tau)}=\exp\left( 
-\sum_{j\geq 1}\frac{(2\pi i  z)^{j}}{j!}G_{j,x}(\tau)\right).
\end{equation}

We now recall Proposition~\ref{JacobiDecomp}, which gives 
\[
F(z;\tau)=f_F(\tau)\cdot \Theta(z;\tau)^{a_{x_0}}
\prod_{i=1}^{N} \frac{\Theta(z-x_i;\tau)^{a_{x_i}}}{\Theta(-x_i;\tau)^{a_{x_i}}} .
\]
Therefore, by taking into account the points of $D$, we find that (\ref{piece}) gives
$$
\log\frac{F(z;\tau)}{f_F(\tau)} = - \sum_{i=0}^N a_{x_i} \sum_{j\geq 1}\frac{(2\pi iz)^{j}}{j!}G_{j,x_i}(\tau)
=- \sum_{j\geq 1}\frac{(2\pi i z)^{j}}{j!}G_{j,D}(\tau),
$$
where in the last line we have used the definition of $G_{k,D}(\tau)$.
To complete the proof, we now apply Lemma~\ref{PolyaGenFunction} with 
$x_j= \frac{1}{(j-1)!}G_{j,D(\tau)}$ and  $y=2\pi i z$ to the exponential of this last expression.

\end{document}